\documentclass[12pt]{article}
\usepackage[centertags]{amsmath}
\usepackage{amsfonts}
\usepackage{amssymb}
\usepackage{amsthm}
\input{amssym.def}

\parindent 2pc

\textwidth=7.1 in \textheight=9.5 in \hoffset=-.7 in \voffset=-.5 in

\numberwithin{equation}{section}

\newtheorem{Definition}{Definition}[section]
\newtheorem{theorem}[Definition]{Theorem}
\newtheorem{lemma}[Definition]{Lemma}

\begin{document}
\title{\Large \bf An elementary proof of a conjecture on graph-automorphism }
\author{ Sajal Kumar Mukherjee and A. K. Bhuniya  }
\date{}
\maketitle

\begin{center}
Department of Mathematics, Visva-Bharati, Santiniketan-731235, India. \\
shyamal.sajalmukherjee@gmail.com, anjankbhuniya@gmail.com
\end{center}

\begin{abstract}
In this article, we give an elementary combinatorial proof of a conjecture about the determination of automorphism group of the power graph of finite cyclic groups, proposed by Doostabadi, Erfanian and Jafarzadeh in 2013.
\end{abstract}
\noindent
\textbf{Keywords:} group; cyclic group; power graph; degree; automorphism
\\ \textbf{AMS Subject Classifications:} 05C25
\section{Introduction}
Let $G$ be a finite group. The concept of directed power graph $\overrightarrow{\mathcal{P}(G)}$ was introduced by Kelarev and Quinn \cite{K}. $\overrightarrow{\mathcal{P}(G)}$ is a digraph with vertex set $G$ and for $x,y \in G$, there is an arc from $x$ to $y$ if and only if $x\neq y$ and $y = x^{m}$ for some positive integer $m$. Following this Chakrabarty, Ghosh and Sen \cite{pet} defined undirected power graph $\mathcal{P}(G)$ of a group $G$ as an undirected graph with vertex set $G$ and two distinct vertices are adjacent if and only if one of them is a positive power of the other.

In 2013, Doostabadi, Erfanian and Jafarzadeh \cite{D} conjectured that for any natural number $n$,

\noindent
Aut $(\mathcal{P}(\mathbb{Z}_n))=(\bigoplus_{d|n, d\neq 1, n}S_{\phi(d)})\bigoplus S_{\phi(n)+1}$, where $\phi$ is the Euler's phi function. Although, if $n$ is a prime power, then $\mathcal{P}(\mathbb{Z}_n)$ is complete \cite{pet}, hence Aut $(\mathcal{P}(\mathbb{Z}_n))= S_{n}$. Hence, the conjecture does not hold if $n = p^{m}$. In June 2014, Min Feng, Xuanlong Ma, Kaishun Wang \cite{s} proved that the conjecture holds for the remaining cases, that is for $n\neq p^{m}$. In fact they proved a more general result, but their proof uses some what complicated group theoritic arguments. Our aim of this paper is to provide a much more elementary combinatorial proof of the conjecture for $n\neq p^{m}$ without using any nontrivial group theoritic result.

\section{ Main Theorem}
In this section, first we prove several lemmas and as a consequence, we shall prove the following result, which is the main theorem theorem of this article.
\begin{theorem}
For $n\neq p^{m}$ ($p$ prime),
Aut $(\mathcal{P}(\mathbb{Z}_n))=(\bigoplus_{d|n, d\neq 1, n}S_{\phi(d)})\bigoplus S_{\phi(n)+1}$
\end{theorem}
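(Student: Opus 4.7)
The key structural observation is that adjacency in $\mathcal{P}(\mathbb{Z}_n)$ depends only on element orders: $x\sim y$ if and only if $o(x)\mid o(y)$ or $o(y)\mid o(x)$, since $\mathbb{Z}_n$ has a unique subgroup of each order dividing $n$. Consequently, for each divisor $d$ of $n$ the set $C_d$ of elements of order $d$ consists of vertices with pairwise equal closed neighborhoods, so every permutation inside $C_d$ extends to an automorphism. Moreover $C_1=\{0\}$ and $C_n$ (the set of generators of $\mathbb{Z}_n$) both consist of universal vertices, hence they merge into a single twin class $C_\infty:=C_1\cup C_n$ of cardinality $\phi(n)+1$, while for each intermediate $d$ (with $1<d<n$) one has $|C_d|=\phi(d)$. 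The plan is to show that every graph automorphism $\sigma$ of $\mathcal{P}(\mathbb{Z}_n)$ fixes each of these twin classes setwise; combined with the freedom inside each class, this yields precisely the asserted direct-sum decomposition.

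\textbf{Step 1: $\sigma(C_\infty)=C_\infty$.} Since $n$ is not a prime power, for every intermediate divisor $d$ of $n$ one can find another divisor of $n$ incomparable to $d$ (for instance, a prime factor of $n$ not dividing $d$ in case some prime of $n$ is missing from $d$, or $p_i^{b_i+1}$ where $b_i=v_{p_i}(d)<v_{p_i}(n)$ otherwise). Hence no element of intermediate order is universal, while every element of $C_\infty$ is universal. Thus $C_\infty$ coincides with the set of vertices of degree $n-1$, a graph-theoretic invariant, and is therefore mapped to itself by $\sigma$; this immediately accounts for the factor $S_{\phi(n)+1}$.

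\textbf{Step 2: $\sigma(C_d)=C_d$ for every intermediate $d$.} This is the core of the proof. Neither the class size $\phi(d)$ nor the vertex degree alone determines $d$: in $\mathbb{Z}_{12}$, for instance, $\phi(3)=\phi(4)=\phi(6)=2$ and the degrees of elements of orders $2$ and $6$ both equal $9$. My strategy is to refine iteratively. Attach to each vertex $x$ of intermediate order $d$ the pair (twin-class size, vertex degree); then refine by replacing each label with the multiset of labels of the neighbors of $x$; and iterate. One then shows that this refinement eventually separates any two distinct intermediate divisors, so no automorphism can interchange their twin classes. A more structural way to organize the same idea is an induction up the divisor poset: the classes $C_p$ with $p$ a prime divisor of $n$ are pinned down by the fact that their non-$C_\infty$ neighbors are exactly the $C_{d'}$ with $p\mid d'$, and once these are fixed one proceeds up the poset, at each stage using the already-identified classes to detect the next ones.

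\textbf{Main obstacle.} The principal difficulty lies precisely in Step 2: ruling out nontrivial permutations of intermediate twin classes by a purely combinatorial argument. Since $\phi$ is far from injective on the divisors of $n$, and the induced degree function in $\mathcal{P}(\mathbb{Z}_n)$ can also coincide across distinct divisors, no single numerical invariant suffices; one has to exploit the divisibility pattern of $n$ in a more delicate way, leveraging the fact that the pattern of ``which twin classes are adjacent to which'' encodes enough information to recover the order of an element up to equality. Once Step 2 is in hand, the theorem is immediate: automorphisms permute arbitrarily inside each twin class and cannot mix distinct classes, so $\mathrm{Aut}(\mathcal{P}(\mathbb{Z}_n))$ decomposes as $\bigl(\bigoplus_{d\mid n,\,d\neq 1,n} S_{\phi(d)}\bigr)\oplus S_{\phi(n)+1}$, exactly as asserted.
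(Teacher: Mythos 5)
Your setup is exactly the right frame, and it matches the paper's: adjacency in $\mathcal{P}(\mathbb{Z}_n)$ depends only on element orders, the order classes are twin classes, the identity and the generators form a single universal class of size $\phi(n)+1$ (your Step 1 is correct and is essentially implicit in the paper), and the whole theorem reduces to showing that every automorphism fixes each intermediate class $X_d$ setwise. That reduction is precisely the paper's Lemma 2.5.

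However, there is a genuine gap: your Step 2 is not a proof but a description of two candidate strategies, and you yourself flag it as the ``main obstacle.'' This step is the entire mathematical content of the paper. Neither of your sketches closes it as stated. The iterated-refinement idea would need an actual argument that the refinement stabilizes only at the order partition; a priori the quotient structure on twin classes (the divisor poset of $n$ with comparability as adjacency, weighted by $\phi$) could admit nontrivial symmetries, and ruling them out is exactly where the work lies. Your ``induction up the divisor poset'' is circular as phrased: you characterize $C_p$ by its adjacency to the classes $C_{d'}$ with $p\mid d'$, but those classes are not yet known to be fixed. The paper resolves this by explicit degree counting: a purely combinatorial inequality (Lemma 2.2, an induction producing an injection from subsets of $\{p_2-1,\dots,p_{k-1}-1\}$ into larger-product subsets) shows that the number of neighbors of $x_{p}$ not adjacent to $x_{d}$ strictly exceeds the number of neighbors of $x_{d}$ not adjacent to $x_{p}$ for the relevant pairs of divisors (Lemmas 2.3 and 2.4), and then a short descent argument (Lemma 2.5) converts these degree obstructions into the statement $\sigma(X_d)=X_d$ for all $d$. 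To complete your proof you would need to supply an argument of comparable substance for Step 2; as written, the proposal identifies the problem correctly but does not solve it.
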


First we prove a technical lemma, which is also the heart of our argument. Before stating it, we have to fix some notations.

Let $S$ be a finite set of positive real numbers. For each subset $B\subseteq S$, let $\prod(B)$ denote the product of all the elements of $B$. Now  let us state and prove the lemma.
\begin{lemma}
Let $n\geq 2$ and $m_1, m_2, \cdots m_n$ be  $n$ positive integers with $m_1>m_2> \cdots >m_n$. Let $m$ be any positive integer, and set $A=\{m_1, m_2, \cdots, m_n; m \}$ and $B=\{m_2, m_3, \cdots, m_n \}$. Then to every non empty subset $S_B$ of $B$, we can associate a proper subset $S_A$ of $A$, for which $m_1, m \in S_A$ and  $\prod(S_B)<\prod(S_A\setminus \{m\})$. The association can be made one to one.
\end{lemma}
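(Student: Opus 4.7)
The plan is to recast the lemma as the existence of a bijection between nonempty subsets of $B$ and proper subsets of $B$ satisfying a product inequality, and then realize this bijection by sorting subsets of $B$ by product. First I would observe that any admissible $S_A$ has the form $S_A = \{m_1, m\} \cup \psi(S_B)$ for some $\psi(S_B) \subseteq B$; the requirement $S_A \subsetneq A$ (together with $m_1, m \in S_A$) is then equivalent to $\psi(S_B)$ being a \emph{proper} subset of $B$, while the product condition becomes $m_1 \cdot \prod(\psi(S_B)) > \prod(S_B)$. Since the set of nonempty subsets of $B$ and the set of proper subsets of $B$ both have exactly $2^{n-1}-1$ elements, the required injective assignment is the same as a bijection.

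To construct $\psi$, linearly order all $2^{n-1}$ subsets of $B$ in non-decreasing product order, breaking ties arbitrarily, as $U_1 = \emptyset, U_2, \ldots, U_{2^{n-1}} = B$. Nonempty subsets are exactly $U_2, \ldots, U_{2^{n-1}}$, while proper subsets are $U_1, \ldots, U_{2^{n-1}-1}$, so the shift $\psi(U_{i+1}) := U_i$ is the desired bijection on the combinatorial level. The problem then reduces to the single inequality
\[
m_1 \prod(U_i) > \prod(U_{i+1}) \qquad (1 \le i \le 2^{n-1}-1).
\]

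The crux — and the main obstacle — is the sharper estimate $\prod(U_{i+1}) \le m_2 \prod(U_i)$, from which the above follows via $m_1 > m_2$. To prove it, write $U := U_i$ and $V := U_{i+1}$. If $\prod(V) = \prod(U)$ there is nothing to show, so assume $\prod(V) > \prod(U)$; by the sorting, $V$ then realizes the minimum product among subsets of $B$ whose product exceeds $\prod(U)$. It therefore suffices to exhibit one $R \subseteq B$ with $\prod(U) < \prod(R) \le m_2 \prod(U)$. The natural choice is $R = U \cup \{e\}$, where $e$ is the smallest element of $B \setminus U$ that is at least $2$. Such $e$ must exist: otherwise $B \setminus U \subseteq \{m_n\}$ with $m_n = 1$, which forces $\prod(U) = \prod(B)$ and so no subset of $B$ has product larger than $\prod(U)$, contradicting the existence of $V$. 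Then $\prod(R) = e \prod(U)$ with $2 \le e \le m_2$ completes the estimate, and with it the lemma.
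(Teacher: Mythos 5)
Your proof is correct, but it takes a genuinely different route from the paper's. The paper proves the lemma by induction on $n$: it treats $m_{k+1}$ as the distinguished element $m$ for the smaller instance $\acute{A}=\{m_1,\dots,m_k;m_{k+1}\}$, $\acute{B}=\{m_2,\dots,m_k\}$, splits the subsets of $B$ according to whether they contain $m_{k+1}$, lifts the inductively obtained association in each case, and assigns the one leftover proper subset to the singleton $\{m_{k+1}\}$; the resulting bijection is recursive and implicit. You instead give a direct, global construction: after reducing to finding a bijection $\psi$ from nonempty to proper subsets of $B$ with $m_1\prod(\psi(S_B))>\prod(S_B)$, you sort all subsets of $B$ by product and shift by one, and the whole lemma collapses to the single estimate that consecutive products in this order differ by a factor of at most $m_2<m_1$, witnessed by $R=U\cup\{e\}$. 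This is cleaner and proves something quantitatively stronger (a multiplicative gap bound of $m_2$ rather than just $m_1$), and it makes the matching explicit; the paper's induction avoids any ordering argument but obscures what the bijection actually is. Two small points you should make explicit: the tie-breaking must be chosen so that $\emptyset$ is $U_1$ and $B$ is $U_{2^{n-1}}$ (possible since every element of $B$ is $\geq 1$, so $\emptyset$ attains the minimum product and $B$ the maximum), and, like the paper, you implicitly treat $m$ as an element distinct from $m_1,\dots,m_n$ so that the counting $2^{n-1}-1$ of proper subsets of $A$ containing $m_1$ and $m$ is valid; both are harmless.
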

\begin{proof}
We prove this by induction on $n$. Let $n=2$.
\noindent
Then $A=\{m_1, m_2, m\}$,  $B=\{m_2\}$ and $m_1>m_2$. The only nonempty subset of $B$
is $B$ itself. To $B$, we associate $\{m_1, m\}$ and the result holds.

Now let the statement be true for $n=k$. We will prove for $n=k+1$. Let $m_1, m_2, \cdots m_{k+1}$ be any $k+1$ positive integers with $m_1>m_2> \cdots > m_{k+1}$ and $m$ be any positive integer. Then $A=\{m_1,m_2, \cdots, m_{k+1}; m\}$ and $B=\{m_2, m_3, \cdots m_{k+1}\}$. Let $B_1$ be the collection of non empty subsets of $B$, which do not contain $m_{k+1}$ and $B_2$ be the collection of subsets of $B$ containing $m_{k+1}$. Now consider $\acute{A}=\{m_1, m_2, \cdots m_k; m_{k+1}\}$ and $\acute{B}=\{m_2,m_3 \cdots m_k\}$. Then by the induction hypothesis, to each nonempty subsets $S_{\acute{B}}$ of $\acute{B}$ we can associate a proper subset $S_{\acute{A}}$ of $\acute{A}$, for which $m_1, m_{k+1}\in S_{\acute{A}}$ and $\prod(S_{\acute{B}})<\prod(S_{\acute{A}}\setminus\{m_{k+1}\})$. Now let $S_{B_1}$ be an arbitrary element of $B_1$. But $S_{B_1}$ is also a non empty subset of $\acute{B}$. So we have a proper subset $S_{\acute{A_1}}$ of $\acute{A}$ for which $m_1, m_{k+1}\in S_{\acute{A_1}}$ and $\prod(S_{B_1})<\prod(S_{\acute{A_1}}\setminus\{m_{k+1}\})$ [by induction hypothesis]. Set $S_{A_1}(\subsetneq A)$ to be $(S_{\acute{A_1}}\setminus\{m_{k+1}\})\bigcup\{m\}$. Clearly $\prod(S_{B_1})<\prod(S_{A_1}\setminus\{m\})$. Now let $S_{B_2}$ be an arbitrary element of $B_2\setminus\{\{m_{k+1}\}\}$. Then $S_{B_2}\setminus\{m_{k+1}\}\in B_1$. So for the sake of simplicity assume that $S_{B_2}\setminus\{m_{k+1}\}=S_{B_1}$. But for $S_{B_1}$, we have $S_{A_1}$, so that $\prod(S_{B_1})<\prod(S_{A_1}\setminus\{m\})$. Let us take $S_{A_2}$ to be $S_{A_1}\bigcup \{m_{k+1}\}$. It is easy to see that $S_{A_2}$ is infact a proper subset of $A$ and $\prod(S_{B_2})<\prod(S_{A_2}\setminus\{m\})$. Now the number of non empty subset of $B$ is equal to the number of proper subset of $A$ containing both $m_1, m$ is equal to $2^k-1$. Hence  for the set $\{m_{k+1}\}$, there still remains exactly one proper subset $S_A$ of $A$, containing both $m_1, m_2$. Clearly $\prod(\{m_{k+1}\})=m_{k+1}<m_1\leq \prod(S_A\setminus\{m\})$. This completes the induction as well as the proof.
\end{proof}

Before plunging into a chain of lemmas, we once again fix some notations. Let $X_d$ denote the set of all generators of the unique cyclic subgroup of $\mathbb{Z}_n$ of order $d$. Then $|X_d|=\phi(d)$ We will denote a general element of $X_d$ by $x_d$.

\begin{lemma}
If $n=p_1p_2 \cdots p_k$ where $p_1>p_2 \cdots >p_k$ are distinct primes then there does not exist any graph automorphism $\sigma \in$ Aut $(\mathcal{P}(\mathbb{Z}_n))$ such that $\sigma(x_{p_i})=x_{p_{i_1}p_{i_2}\cdots p_{i_l}}$ for any $i, l$ and $i_1, i_2, \cdots i_l$ satisfying $i\geq i_1>i_2>\cdots >i_l$.
\end{lemma}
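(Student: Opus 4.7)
I would suppose such a $\sigma$ exists with $e = p_{i_1}\cdots p_{i_l}\neq p_i$ and derive a contradiction from the fact that a graph automorphism preserves degrees and, more generally, the multiset of degrees on every closed neighborhood. The first step is to record the closed formula $\deg(x_d) = d - 1 + \phi(d)(n/d - 1)$ for every $d\mid n$, obtained by noting that $x_d$ is adjacent to $x_{d'}\neq x_d$ exactly when $d\mid d'$ or $d'\mid d$ and applying $\sum_{d'\mid m}\phi(d') = m$ to the up- and down-sets of $d$ in the divisor lattice of $n$. In particular $\deg(x_{p_i}) = n - n/p_i$. When $l = 1$, the condition $e\neq p_i$ forces $i_1 < i$, so $p_{i_1}>p_i$ and $\deg(x_e)=n-n/p_{i_1}\neq\deg(x_{p_i})$, giving an immediate contradiction. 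For $l\geq 2$ the degrees alone need not differ (for instance in $n = 3\cdot 7\cdot 13 = 273$ one has $\deg(x_7)=\deg(x_{91})=234$), so I would pass to the finer invariant of the multiset of degrees on the closed neighborhood, which $\sigma$ must also preserve.

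Writing each closed neighborhood as a disjoint union of classes $X_d$ and setting $J=\{p_{i_1},\dots,p_{i_l}\}$, the multisets of degrees on the symmetric differences $N[x_{p_i}]\setminus N[x_e]$ and $N[x_e]\setminus N[x_{p_i}]$ must agree. A direct inspection of which divisors $d$ index these pieces shows that $N[x_e]\setminus N[x_{p_i}]$ always contains the prime class $X_{p_{i_j}}$ for every $p_{i_j}\in J\setminus\{p_i\}$, contributing $p_{i_j}-1$ vertices of the prime-type degree $n-n/p_{i_j}$, whereas every class $X_d$ appearing in $N[x_{p_i}]\setminus N[x_e]$ has $d = p_i\cdot d'$ for some nontrivial $d'\mid n/p_i$ that is incomparable to $e/\gcd(e,p_i)$, and in particular $d$ is composite.

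The delicate point, and the main obstacle, is that a composite $d$ can nonetheless realise a prime-type degree $n - n/p_{i_j}$ (as witnessed in the $n = 273$ example), so one cannot conclude by merely counting prime-order neighbors. This is where Lemma 2.2 enters: applied with $m_j = p_{i_j}$ and $m = p_i$, it produces an injection from the nonempty subsets of $\{p_{i_2},\dots,p_{i_l}\}$ into the proper subsets of $\{p_{i_1},\dots,p_{i_l},p_i\}$ containing both $p_{i_1}$ and $p_i$, with the strict inequality $\prod(S_B)<\prod(S_A\setminus\{p_i\})$. I would translate this strict inequality into the power-graph setting to bound the number of composite-$d$ classes on the $N[x_{p_i}]$-side that could possibly realise a degree of the form $n - n/p_{i_j}$ strictly below the total count of $X_{p_{i_j}}$-vertices on the $N[x_e]$-side; this forces the two degree-multisets on the symmetric differences to differ, contradicting the existence of the assumed $\sigma$ and completing the argument.
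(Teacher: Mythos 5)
Your setup is sound as far as it goes: the degree formula $\deg(x_d)=d-1+\phi(d)(n/d-1)$ is correct for squarefree $n$, the $l=1$ case is disposed of correctly, and your observation that $\deg(x_7)=\deg(x_{91})=234$ in $\mathcal{P}(\mathbb{Z}_{273})$ is accurate and genuinely important. It shows that a bare degree comparison cannot settle every instance of the statement as written (note that this is exactly the comparison the paper itself relies on: its proof reduces the claim to the strict inequality $\deg(x_{p_{i_1}})>\deg(x_{p_{i_1}\cdots p_{i_l}})$ and only verifies the representative case $\deg(x_{p_1})>\deg(x_{p_1\cdots p_{k-1}})$, where $p_1$ is the \emph{largest} prime; your example falls outside what ``follows exactly in the similar fashion''). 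So you have correctly located the hard part of the lemma.

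The problem is that you do not then prove it. Everything after ``This is where Lemma 2.2 enters'' is a declaration of intent --- ``I would translate this strict inequality \ldots to bound \ldots this forces the two degree-multisets \ldots to differ'' --- not an argument, and the proposed bridge does not obviously exist. Lemma 2.2 is a statement about \emph{cardinalities}: it injects nonempty subsets $S_B$ into proper subsets $S_A$ with $\prod(S_B)<\prod(S_A\setminus\{m\})$, which, with $m_j=p_j-1$, compares the \emph{numbers} of vertices in the classes $X_d$ lying on the two sides of the symmetric difference. It says nothing about the \emph{degrees} of those vertices, which is precisely the information your finer invariant needs; indeed in the equal-degree case the two sides of the symmetric difference have equal size (e.g.\ $X_{21}$ versus $X_{13}$, both of size $12$, for $x_7$ versus $x_{91}$ in $\mathbb{Z}_{273}$), so no cardinality inequality of the Lemma 2.2 type can be what separates them --- you must compare degrees class by class, and no such comparison is carried out. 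There is also a smaller slip: your claim that every class $X_d$ in $N[x_{p_i}]\setminus N[x_e]$ has $d=p_i d'$ with $d'$ nontrivial tacitly assumes $p_i\mid e$; when $p_i\nmid e$ the prime class $X_{p_i}$ itself lies in that difference. As it stands, the case $l\geq 2$ --- the entire content of the lemma beyond the trivial case --- is unproven.
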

\begin{proof}
To prove the lemma, it suffices to prove that degree $(x_{p_i})>$ degree $(x_{p_{i_1}p_{i_2}\cdots p_{i_l}})$, satisfying the above stated condition. But, since degree$(x_{p_i})\geq$ degree$(x_{p_{i_1}})$ [as $i \geq {i_1}$], it suffices to prove that degree $(x_{p_{i_1}})>$ degree $(x_{p_{i_1}p_{i_2}\cdots p_{i_l}})$. We will only prove that degree $(x_{p_1})>$ degree $(x_{p_1p_2 \cdots p_{k-1}})$ for the sake of simplicity, because the other cases will follow exactly in the similar fashion. Now, in Lemma $3.1$, take

$A = \{ p_{1}-1,   p_{2}-1,   \cdots  p_{k-1}-1,   p_{k}-1 \}$  and $B = \{ p_{2}-1,   p_{3}-1,   \cdots  p_{k-1}-1 \}$. Then from Lemma $3.1$ we see that the total number of vertices adjacent to $x_{p_1}$ but not adjacent to $x_{p_1p_2\cdots p_{k-1}}$ is strictly greater than the number of vertices adjacent to $x_{p_1p_2\cdots p_{k-1}}$ but not adjacent to $x_{p_1}$. And this proves the claim.
\end{proof}

\begin{lemma}
Let $n=p_1^{m_1}p_2^{m_2} \cdots p_k^{m_k}$ and $p_1^{m_1}>p_2^{m_2}> \cdots p_k^{m_k}$. Then we have :

(i): There does not exist any automorphism $\sigma$ such that $\sigma(x_{p_i})=x_{p_{i_1}^{\alpha_1}p_{i_2}^{\alpha_2 }\cdots p_{i_r}^{\alpha_r}}$ for any $i, r$ and a proper subset $\{i_1, i_2, \cdots i_r\}$ of $\{1, 2, \cdots k\}$ satisfying $i\geq i_1>i_2 \cdots >i_r$.

(ii): There does not exist any automorphism $\eta$ such that $\eta(x_{p_1})=x_{{p_1}^{\alpha_1}{p_2}^{\alpha_2}\cdots {p_k}^{\alpha}_k}$ for any $\alpha_1, \alpha_2, \cdots \alpha_k$ with $1\leq \alpha_i\leq m_i$ for all $i=1, 2, \cdots k$.
\end{lemma}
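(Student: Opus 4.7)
The strategy in both parts is to exhibit an invariant of a vertex, preserved by every automorphism of $\mathcal{P}(\mathbb{Z}_n)$, that distinguishes $x_{p_i}$ (respectively $x_{p_1}$) from the target $x_d$.  I shall use two such invariants: the degree $\deg(\cdot)$, and the size of the true-twin class $\{y:N[y]=N[x]\}$ of a vertex $x$, where $N[x]$ denotes the closed neighborhood.  Both are manifestly automorphism-invariant.  A short analysis of the divisor lattice of $n$, valid because $n\neq p^m$, shows that in $\mathcal{P}(\mathbb{Z}_n)$ the twin class of $x$ equals $X_{|x|}$ whenever $|x|\notin\{1,n\}$ and equals $X_1\cup X_n$ when $|x|\in\{1,n\}$; hence its size equals $\phi(|x|)$ or $\phi(n)+1$ accordingly.

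For part~(i), since $d$ misses some prime of $n$ we have $d\notin\{1,n\}$.  I mimic the proof of Lemma~3.3: by relabelling it suffices to treat the representative case $i=1$, $d=p_1^{\alpha_1}\cdots p_{k-1}^{\alpha_{k-1}}$ (missing $p_k$), and to show $\deg(x_{p_1})\neq\deg(x_d)$.  Writing $U=N(x_{p_1})\setminus N(x_d)$ and $V=N(x_d)\setminus N(x_{p_1})$, a direct count using $\sum_{\beta=0}^{\alpha}\phi(p^\beta)=p^\alpha$ yields $|V|=\prod_{j=2}^{k-1}p_j^{\alpha_j}$.  The set $U$ decomposes into neighbors whose order involves the missing prime $p_k$ and neighbors whose prime-power exponent vector is divisibility-incomparable with that of $d$.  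Applying Lemma~3.1 to the multiset $\{\phi(p_j^{m_j}):1\le j\le k\}$, reordered by decreasing magnitude with the contribution of $p_k$ playing the role of the auxiliary $m$, bounds $|U|$ below and yields $|U|>|V|$.

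For part~(ii), the target uses every prime and degree comparison alone may fail: for $n=12$, $d=6$ one has $\deg(x_{p_1})=\deg(x_d)=9$.  The twin-class-size invariant closes the gap.  If $d=n$, the size $\phi(n)+1$ exceeds $\phi(p_1)$.  If $d<n$, then $\phi(d)=\prod_i\phi(p_i^{\alpha_i})\ge\phi(p_1)$, with equality forcing $\alpha_1=1$ and $\phi(p_i^{\alpha_i})=1$ for every $i\ge 2$; the latter requires $p_i^{\alpha_i}=2$, so by distinctness of the primes one must have $k=2$, $p_2=2$, $\alpha_1=\alpha_2=1$, and hence $d=2p_1\neq n$.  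In this single residual configuration, with $n=2^{m_2}p_1^{m_1}$ and $(m_1,m_2)\neq(1,1)$, an explicit computation gives $\deg(x_{p_1})=(p_1^{m_1}-1)\,2^{m_2}$ and a parallel closed form for $\deg(x_d)$; a short sub-case check shows the two differ in each sub-case (by $p_1^{m_1}-p_1-1$ when $m_2=1$, by $1$ when $m_1=1$, and by a positive quantity when $m_1,m_2\ge 2$).  Consequently, in every case at least one of the two invariants separates $x_{p_1}$ from $x_d$, so no automorphism $\eta$ with $\eta(x_{p_1})=x_d$ exists.

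The main obstacle is the Lemma~3.1 application in part~(i): the given ordering $p_1^{m_1}>\cdots>p_k^{m_k}$ need not descend to an ordering of the $\phi(p_j^{m_j})$ (for instance $2^5>31$, yet $\phi(32)=16<30=\phi(31)$), so one must reorder the multiset before invoking Lemma~3.1 and track the decomposition of $U$ into its two parts carefully.  A secondary subtlety is the exhaustive sub-case check in the narrow residual configuration of part~(ii).
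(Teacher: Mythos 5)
Your proposal is correct in outline, and for part (ii) it takes a genuinely different route from the paper. For part (i) you do essentially what the paper does (the paper's entire proof of (i) is ``exactly similar to Lemma 2.3'', i.e.\ a degree comparison fed by the product lemma); your observation that the hypothesis $p_1^{m_1}>\cdots>p_k^{m_k}$ does \emph{not} give the strict ordering of the quantities $\phi(p_j^{m_j})$ that the product lemma requires (e.g.\ $2^5>31$ but $\phi(2^5)<\phi(31)$) is a real point that the paper glosses over, and your fix (reorder before invoking the lemma) is the right one, though your lower bound on $|U|$ is only sketched. For part (ii) the paper proceeds entirely by neighborhood counts: it splits into $k\ge 3$ and $k=2$, compares $|N(x_{p_1})\setminus N(x_d)|$ with $|N(x_d)\setminus N(x_{p_1})|$ in each subcase, and in the one configuration where a direct count is unavailable it uses a second-order argument --- since $x_q\notin N[x_p]$, the image $\eta(x_q)$ must be a non-neighbor of $\eta(x_p)$, and \emph{that} vertex is then killed by a degree count. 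You instead introduce the closed-twin-class size as a second invariant, use $\phi(d)\ge\phi(p_1)$ with equality only for $d=2p_1$, $k=2$, $p_2=2$, and finish that single residual case by an explicit degree computation (your formulas check out: the difference is $p_1^{m_1}-p_1-1$, which is never zero). Your route buys a structural explanation of why pure degree comparison cannot suffice --- your example $n=12$, $\deg(x_2)=\deg(x_6)=9$, is exactly the case forcing the paper's indirect step --- and it collapses the paper's subcase tree to one computation; the paper's route buys independence from the twin-class characterization, which you assert but do not prove. That characterization is the one load-bearing claim you must supply: for your argument you need at least that the closed twin class of $x_{p_1}$ is \emph{exactly} $X_{p_1}$, i.e.\ that for every divisor $e\ne p_1$ of $n$ some vertex separates $N[x_{p_1}]$ from $N[x_e]$; this is true for $n\ne p^m$ and provable in a short paragraph, but as written it is an unproved step, and it is also the point at which your proof starts to resemble the Feng--Ma--Wang approach that the paper is explicitly trying to avoid.
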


\begin{proof}
(i): Proof of this part is exactly similar to that of Lemma $2.3$.

 (ii): We divide this case into two cases.

Case(1): $k\geq 3$,  i.e $n=p_1^{x_1}p_2^{x_2}\cdots p_k^{x_k}$ where $k\geq 3$ and $p_1^{x_1}>p_2^{x_2}> \cdots > p_k^{x_k}$. We will eliminate the following two difficult cases. Rests will follow quite similarly.

Subcase(1.1): If possible let $\eta(x_{p_1})=x_{p_1^{x_1}p_2^{r}p_3^{x_3}\cdots p_k^{x_k}}$ where $1\leq r<x_2$. Then the number of  vertices adjacent to $x_{p_1^{x_1}p_2^{r}p_3^{x_3}\cdots p_m^{x_m}}$ but not adjacent to $x_{p_1}$ is exactly equal to $(\phi(p_2)+\phi(p_2^2)+\cdots +\phi(p_2^r)+1)(\phi(p_3)+\phi(p_3^2)\cdots + \phi(p_3^{x_3})+1)\cdots (\phi(p_k)+\phi(p_k^2)+\cdots+\phi(p_k^{x_k})+1)-1$ which is equal to $p_2^{r}p_3^{x_3}\cdots p_k^{x_k}-1$. Now there are at least $(\phi(p_1)+\phi(p_1^2)+\cdots +\phi(p_1^{x_1}))(\phi(p_2^{r+1})+\phi(p_2^{r+2})+\cdots \phi(p_2^{x_2}))(\phi(p_3)+\phi(p_3^2)+\cdots +\phi(p_3^{x_3})+1)\cdots (\phi(p_{k-1})+\phi(p_{k-1}^2)+\cdots +\phi(p_{k-1}^{x_{k-1}})+1)=(p_1^{x_1}-1)(p_2^{x_2}-p_2^{r})p_3^{x_3}\cdots p_{k-1}^{x_{k-1}}$ number of vertices adjacent to $x_{p_1}$ but not adjacent to $x_{p_1^{x_1}p_2^rp_3^{x_3}\cdots p_k^{x_k}}$. But we see that $(p_1^{x_1}-1)(p_2^{x_2}-p_2^{r})\cdots (p_{k-1}^{x_{k-1}})>p_2^{r}p_3^{x_3}\cdots p_k^{x_k}$ which shows that degree of $x_{p_1}$ is strictly greater than the degree of $\sigma(x_{p_1})$ which is a contradiction.

Subase(1.2): If possible let $\sigma(x_{p_1})=x_{p_1^{m_1}p_2^{x_2}\cdots p_k^{x_k}}$ where $1\leq m_1<x_1$ but degree $(x_{p_1})>$ degree $(x_{p_2})>$ degree $(x_{p_1^{m_1}p_2^{x_2}\cdots p_k^{x_k}})$. [Second inequality follows exactly same way as the previous case ] Hence a contradiction.

Case(2): $k=2$ i.e $n$ is of the form $n=p^a q^b$ where $p^a>q^b$. Now we have the following situations.

Subcase(2.1): $a=1$, hence we may assume that $b\geq 2$(because the case $b=1$ has already been dealt with). Now if possible let there exists an automorphism $\sigma$ such that $\sigma(x_p)=x_{pq^t}, 1<t<b$. Now since $x_q$ is not adjacent to $x_p$, $\sigma(x_q)$ should be some $x_{q^s}$ where $s>t$. But this is impossible, because degree $(x_q)>$ degree $(x_{q^s})$.

Subcase(2.2): $a>1$. Now if possible let $\sigma(x_p)=x_{p^mq^n}$ where $m\leq a$ and $n\leq b$. Let us assume that $n=b$ and also assume that $b>1$(the case $n<b$ can be handled similarly). Then the number of vertices adjacent to $x_{p^mq^b}$ but not adjacent to $x_p$ is $\phi(q)+\phi(q^2)+\cdots +\phi(q^b)=q^b-1$. And the number of vertices adjacent to $x_p$ but not  adjacent to $x_{p^mq^b}$ is $(\phi(p^{m+1})+\phi(p^{m+2})+\cdots + \phi(p^a))(\phi(q)+\phi(q^2)+\cdots +\phi(q^{b-1})+1)=(p^a-p^m)(q^{b-1})$ which can not be equal to $q^b-1$ if $b>1$.

So assume that $b=1$. Now if possible let $\sigma(x_p)=x_{p^mq}$. Now by the same logic as subcase2.1, $\sigma(x_q)$ is of the form $x_{p^s}$ where $s>m$. Now the number of vertices that are adjacent to $x_{p^s}$ but not adjacent to $x_q$ is $p^a-1$ but the number of vertices adjacent to $x_q$ but not adjacent to $x_{p^s}$ is equal to $(q-1)p^{s-1}$ which is not equal to $p^a-1$. Hence a contradiction and the proof is complete.

\end{proof}

Now we state our main lemma, which immediately implies the theorem.

\begin{lemma}
If $\sigma \in$ Aut $(\mathcal{P}(\mathbb{Z}_n))$, then $\sigma(X_d)=X_d$ for all $d(\neq 1, n)$ dividing $n$.
\end{lemma}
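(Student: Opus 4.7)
My plan is to proceed in three stages: first reduce to twin classes, then fix every prime class setwise, then extend to all divisors by bootstrapping.

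\emph{Stage 1 (twin classes).} Two vertices of the same $X_d$ have identical closed neighborhoods in $\mathcal{P}(\mathbb{Z}_n)$, since adjacency to an element of $X_d$ depends only on the order of the other endpoint. For distinct $d \ne d'$ in $(1,n)$ one can always exhibit a divisor $d'' \mid n$ witnessing $N[X_d] \ne N[X_{d'}]$: an element incomparable with one of $d,d'$ in the divisor lattice when $d,d'$ are incomparable, or a prime-power refinement of the larger divisor when one divides the other. Because $n$ is not a prime power, the universal vertices of $\mathcal{P}(\mathbb{Z}_n)$ are exactly $X_1 \cup X_n$; hence $\sigma$ fixes $X_1 \cup X_n$ setwise and induces a size-preserving permutation of $\{X_d : d \mid n,\ 1 < d < n\}$.

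\emph{Stage 2 (prime classes).} I would show $\sigma(X_{p_i}) = X_{p_i}$ for every prime $p_i \mid n$ by induction on $i$. Writing $\sigma(X_{p_i}) = X_d$ we have $\phi(d) = p_i - 1$. For $i=1$, Lemma 2.4(i) rules out $d = p_1^{\alpha}$ with $\alpha \ge 2$ and Lemma 2.4(ii) rules out $d$ involving every prime of $n$; the remaining size-consistent candidates (essentially $d$ a prime power of a different prime $p_j$ with $\phi(p_j^{\alpha}) = p_1 - 1$) are eliminated by a direct degree/neighborhood comparison in the spirit of Lemma 2.4's own proof. For $i > 1$, the inductive hypothesis gives $\sigma(X_{p_j}) = X_{p_j}$ for $j<i$; since $X_{p_i}$ is not adjacent to $X_{p_j}$ in the power graph (distinct primes are incomparable in the divisor lattice), adjacency preservation forces $p_j \nmid d$ for every $j<i$. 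Combined with Lemma 2.4(i), which excludes $d$ whose prime support is a proper subset of $\{p_1,\ldots,p_i\}$ other than $\{p_i\}$, the only remaining possibilities are $d=p_i$ and a short residual list where $d$ contains some $p_j$ with $j>i$; the latter is dispatched by the same degree argument as in the base case.

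\emph{Stage 3 (all divisors).} Granted $\sigma(X_p) = X_p$ for every prime $p \mid n$, for each prime power $p^a \mid n$ the class $X_{p^a}$ is characterized among twin classes by (i) adjacency to $X_p$, (ii) non-adjacency to $X_q$ for every prime $q \ne p$, and (iii) cardinality $\phi(p^a)$; all three conditions are $\sigma$-invariant, so $\sigma(X_{p^a}) = X_{p^a}$. For an arbitrary $d \in (1,n)$ dividing $n$, the set $\{p^a : X_{p^a} \text{ adjacent to } X_d\}$ equals $\{p^a : p^a \mid d\}$ (the prime-power case of $d$ having already been handled), and this set recovers $d$; since $\sigma$ fixes each $X_{p^a}$ and preserves adjacency, one concludes $\sigma(X_d) = X_d$.

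The principal technical obstacle is the base case of Stage 2: Lemma 2.4 is tailored chiefly to $\sigma(x_{p_1})$, and even there a few size-matching targets can survive parts (i) and (ii) and must be killed by an explicit degree comparison analogous to the ones carried out in the proof of Lemma 2.4 itself; once that base case is settled, the non-adjacency propagation drives the induction for the remaining prime classes and Stage 3 is a routine bootstrap.
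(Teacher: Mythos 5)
Your architecture is genuinely different from the paper's. The paper's proof is a single contradiction argument: assuming $\sigma(x_{d_1})=x_{d_2}$ with $d_1\neq d_2$, it picks a prime $p$ dividing $d_2$ but not $d_1$, notes that $v=\sigma^{-1}(x_p)$ is a vertex distinct from $x_p$ and adjacent to $x_{d_1}$, and then ping-pongs between $\sigma$ and $\sigma^{-1}$, extracting at each step a strictly larger prime divisor of the current image; since the primes strictly increase, the chain must terminate in a configuration where some prime $q$ is sent to a vertex all of whose prime divisors are at most $q$, which is exactly the situation Lemmas 2.3 and 2.4 forbid. The whole point of that design is that the \emph{only} degree comparisons ever needed are the ones those lemmas actually supply. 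Your Stages 1 and 3, by contrast, are a clean bootstrap the paper never writes down (it just says ``the general case is similar''): reducing to twin classes and then recovering a general $d$ from the set of prime-power classes adjacent to $X_d$ is correct and arguably tidier than anything in the paper.

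The problem is Stage 2, which you yourself flag as the principal obstacle, and which as written carries the entire difficulty of the lemma in an unexecuted ``direct degree/neighborhood comparison.'' You should not need new degree computations there at all, and leaving them as a gesture means the one hard step is missing. The observation that closes it is a cardinality one: if $\sigma(X_{p_i})=X_d$ then $\phi(d)=\phi(p_i)=p_i-1$, and since any prime $q$ dividing $d$ forces $\phi(d)\geq q-1$, every prime divisor of $d$ is at most $p_i$. Together with your non-adjacency propagation (which strips $p_1,\dots,p_{i-1}$ from the support of $d$) this places every surviving candidate squarely inside the scope of Lemma 2.4(i) (read, as the paper's own use of it in this proof indicates, as excluding images all of whose prime divisors are $\leq p_i$), with the full-support case handled by Lemma 2.4(ii) at $i=1$; no ``residual list'' remains. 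Without that observation, your residual list in the base case is not short -- it is every divisor $d$ with $\phi(d)=p_1-1$ whose support is a proper subset of the primes other than $\{p_1\}$ -- and asserting that each such case ``is dispatched by the same degree argument'' is precisely the part of the proof that cannot be waved at, since each such comparison is a computation of the same length as the subcases worked out inside the proof of Lemma 2.4. Either add the cardinality argument, or reorganize as the paper does so that every invocation of Lemma 2.3/2.4 lands in the configuration those lemmas cover.
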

\begin{proof}
We will illustrate the proof for $n=p_1p_2p_3 \cdots p_k$. The general case is similar. Suppose if possible $\sigma (x_{d_1})=x_{d_2}$ for some automorphism $\sigma$ and $d_1, d_2$ two distinct non trivial divisors of $n$. Then there exists some prime $p\in\{p_1, p_2, \cdots p_k\}$ such that $p$ divides $d_2$ but does not divide $d_1$. So by the definition of automorphism there exists an automorphism which sends $p$ to a vertex say $v$ adjacent to $d_1$. Note that $v\neq p$. If $p$ is greater than or equal to each prime divisors of $v$ then we get a contradiction from Lemma $2.3$. So there exists at least a prime divisor of $v$ say $q$ which is strictly greater than $p$. Now there exists an automorphism, which sends $q$ to a vertex $v_1$ adjacent to $p$. Again note that $v_1$ is not equal to $q$. And we proceed similarly as before to eventually reach a contradiction using lemma $2.3$.

\end{proof}

\noindent
\textbf{Acknowledgement:} The first author is partially supported by CSIR-JRF grant.

\bibliographystyle{amsplain}

\begin{thebibliography}{10}
\baselineskip 5mm

\bibitem{D}
A. Doostabadi, A. Erfanian, A. Jafarzadeh, Some results on the power graph of groups. The 44th Annual Iranian Mathematics Conference 2013,
 Ferdowsi University of Mashhad, Iran.

\bibitem{Cam}
P. J. Cameron, The power graph of a group, LTCC open day, 2010.

\bibitem{pet}
I. Chakrabarty, S. Ghosh, M. K. Sen,  Undirected power graphs of semigroups, Semigroup
Forum 78(2009)410-426.

\bibitem{K}
A. V. Kelarev, S. J. Quinn, A combinatorial property and power graphs of groups, Contributions to general algebra, 12 (Vienna, 1999), 229-235, Heyn, Klagenfurt, 2000.

\bibitem{s}
M. Feng,  X. Ma, K. Wang, The full automorphism group of the power
(di) graph of a finite group. arXiv:1406.2788v1(2014).


\end{thebibliography}

\end{document}